\newcommand{\qm}[1]{\quad\mbox{#1}}
\newlength{\standardunitlength}
\newcommand{\bea}{\begin{eqnarray}}
\newcommand{\ena}{\end{eqnarray}}
\newcommand{\beas}{\begin{eqnarray*}}
\newcommand{\enas}{\end{eqnarray*}}
\newcommand{\qmq}[1]{\quad \mbox{#1} \quad}
\newcommand{\Bvert}{\left\vert\vphantom{\frac{1}{1}}\right.}
\newcommand{\ignore}[1]{}
\newtheorem{prop}{Proposition}[section]
\newtheorem{theorem}[prop]{Theorem}
\begin{document}

\title [Stein's method and the semicircle distribution] {Stein's method, semicircle distribution, and reduced decompositions of the longest element in the symmetric group}

\author{Jason Fulman}

\address{Department of Mathematics\\
        University of Southern California\\
        Los Angeles, CA, 90089}
\email{fulman@usc.edu}

\author{Larry Goldstein}
\address{Department of Mathematics\\University of Southern California\\
        Los Angeles, CA, 90089}
\email{larry@math.usc.edu}

\thanks{Fulman was supported by NSA grant H98230-13-1-0219.}

\keywords{Stein's method, semicircle distribution, sorting network, reduced decomposition, longest element}

\date{May 3, 2014}

\begin{abstract} Consider a uniformly chosen random reduced decomposition of the longest element in the symmetric group. It is known
that the location of the first transposition in this decomposition converges to the semicircle distribution. In this note we provide a sharp error term for this result, using the ``comparison of generators'' approach to Stein's method.
\end{abstract}

\maketitle

\section{Introduction} \label{into}

The length of a permutation is the minimal number of terms needed to express the permutation as a product of adjacent transpositions. The longest element in the symmetric group $S_n$ is the permutation \[ n \ n-1  \cdots \ 2 \ 1 , \] and it has length ${n \choose 2}$. For $1 \leq s \leq n-1$, denote the adjacent transposition at location $s$ by $\tau_s = (s \ s+1)$. A {\it reduced decomposition} of the longest element is defined as a sequence $s_1,\cdots,s_{{n \choose 2}}$ such that the longest element is equal to $\tau_{s_1} \tau_{s_2} \cdots \tau_{s_{{n \choose 2}}}$.

There are nice enumerative results about reduced decompositions of the longest element. Stanley \cite{St} proved that the number of reduced decompositions of the longest element of $S_n$ is equal to \[ \frac{{n \choose 2}!}{1^{n-1} 3^{n-2} 5^{n-3} \cdots (2n-3)^1} .\]  This formula was later proved bijectively by Edelman and Greene \cite{EG}. A gentle introduction to the enumerative theory of reduced decompositions of the longest element (and of other permutations) can be found in Chapter 7 of \cite{BS}. Garsia's lecture notes \cite{Ga} are also very informative.

Given these enumerative results, it is very natural to study uniformly chosen random reduced decompositions of the longest element. For example, a reduced decomposition of the longest element is said to have a ``Yang-Baxter'' move at position $k$ if $s_k,s_{k+1},s_{k+2}$ is equal to $j,j+1,j$ or to $j+1,j,j+1$ for some $j=1,2, \cdots n-2$. Reiner \cite{R} proves that for all $n \geq 3$, the expected number of Yang-Baxter moves of a random reduced decomposition of the longest element of $S_n$ is equal to 1. He conjectures that for $n \geq 4$, the variance of the number of Yang-Baxter moves is $\frac{{n \choose 2}-4}{{n \choose 2} -2}$, and that as $n \rightarrow \infty$, the distribution of the number of Yang-Baxter moves approaches that of a Poisson random variable with mean $1$. Reiner's conjecture seems challenging, but it is conceivable that Stein's method for Poisson approximation (\cite{AGG},\cite{CDM}) will be helpful. The results of the current paper show that the ``comparison of generators'' approach to Stein's method is useful for studying random reduced decompositions of the longest element.

Reduced decompositions of the longest element are also called {\it sorting networks} by Angel, Holroyd, Romik, and Virag \cite{AHRV}, who study them extensively. One of their results concerns the random location $X=s_1$ of the first adjacent transposition of the reduced decomposition of the longest element. Using the bijection of Edelman and Greene \cite{EG}, they show that for $1 \leq k \leq n-1$, the probability $p(k)=P(X=k)$ is given by
\begin{equation} \label{fi}
p(k)=\frac{1}{{n \choose 2}}\frac{(3 \cdot 5 \cdots (2k-1))}{(2 \cdot 4 \cdots (2k-2))} \frac{(3 \cdot 5 \cdots (2(n-k)-1))}{(2 \cdot 4 \cdots (2(n-k)-2))}.\end{equation} We say that a random variable $S$ has the semi-circle distribution (also called the Sato-Tate distribution by number theorists) if it has density function
\beas \frac{2}{\pi} \sqrt{1-s^2} \qmq{for $s \in (-1,1)$.} \enas
Using \eqref{fi} and Stirling's formula, the paper \cite{AHRV} shows that
\bea \label{limit.semi}
\frac{2X}{n} - 1 \rightarrow_d S \qmq{as $n \rightarrow \infty$,}
\ena where $\rightarrow_d$ denotes convergence in distribution. The main result of this note provides upper and lower bounds of the same order on the Wasserstein distance between $2X/n-1$ and its semi-circle distributional limit.

We recall that the Wasserstein distance between two random variables $U$ and $V$ is given by
\bea \label{w.as.sup}
d_W(U,V) = \sup_{h\in {\mathcal L}_1 }|Eh(U)-Eh(V)|,
\ena
where ${\mathcal L}_1$ is the collection of all $1-$Lipschitz functions $h$, that is, those functions $h$ that satisfy $|h(x)-h(y)| \le |x-y|$ for all real $x,y$. Alternatively, see \cite{Ra} for instance, we have
\bea \label{alter.wass}
d_W(U,V) = \inf_P E|U-V|
\ena
where the infimum is taken over all joint distributions $P$ that have the given marginals.

\begin{theorem} \label{mainthm}
For $X$ with distribution \eqref{fi} and $S$ having the semi-circle distribution, \beas
\frac{1}{16n} \le d_W\left(\frac{2X}{n}-1,S\right) \le \frac{59}{n}.
\enas
\end{theorem}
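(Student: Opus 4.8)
The plan is to prove the upper and lower bounds separately, since they require rather different techniques. The key structural fact I would exploit is the explicit product formula \eqref{fi} for $p(k)$, which gives a clean \emph{ratio} between consecutive probabilities:
\beas
\frac{p(k+1)}{p(k)} = \frac{2k+1}{2k+2}\cdot\frac{2(n-k)-2}{2(n-k)-1} = \frac{(2k+1)(n-k-1)}{(k+1)(2(n-k)-1)}.
\enas
This recursion is the engine for a Stein's method argument in the ``comparison of generators'' style advertised in the introduction.

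For the upper bound, I would set up a Stein equation for the semicircle limit $S$. The semicircle distribution on $(-1,1)$ satisfies a Stein characterization of the form $\ee[(1-W^2)f'(W) - 3Wf(W)] = 0$ (one can derive the coefficients by integration by parts against the density $\tfrac{2}{\pi}\sqrt{1-s^2}$), so $S$ is the fixed point of the generator $Af(w) = (1-w^2)f''(w) - 3wf'(w)$ of an associated diffusion, whose stationary measure is the semicircle law. The idea is to compare this continuous generator with a discrete ``generator'' built from the one-step ratios of the distribution \eqref{fi}. Concretely, I would let $W = 2X/n - 1$, write down the discrete operator whose stationary distribution is exactly $p$ using the birth-death-type balance $p(k)\,\beta_k = p(k+1)\,\delta_{k+1}$ determined by the ratio above, and then bound $|\ee[Af(W)]|$ for $f$ the solution of the Stein equation $Af = h - \ee h(S)$. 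The two operators differ by error terms coming from (i) the discretization (replacing difference quotients by derivatives, controlled by Taylor expansion and bounds on $f'',f'''$) and (ii) the approximation of the exact ratio $p(k+1)/p(k)$ by its leading-order continuous analogue. Standard Stein machinery gives uniform bounds on the derivatives of $f$ in terms of the Lipschitz constant of $h$, so after summing the per-term errors against $p(k)$ one obtains a bound of order $1/n$; tracking the constants carefully yields the explicit $59/n$.

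For the lower bound, I would not use Stein's method at all but instead exploit the variational characterization \eqref{w.as.sup}: it suffices to exhibit a single $1$-Lipschitz test function $h$ for which $|\ee h(W) - \ee h(S)|$ is at least $\tfrac{1}{16n}$. The natural choice is something like $h(x)=x$ or a smooth bump, for which both $\ee h(W)$ and $\ee h(S)$ can be computed or estimated directly from \eqref{fi} and the semicircle density. Because $W$ is a lattice random variable supported on the discrete points $2k/n - 1$ while $S$ is continuous, there is an unavoidable discrepancy of order $1/n$; alternatively, comparing a low moment such as $\ee[W^2]$ against $\ee[S^2]=1/4$ (the semicircle has variance $1/4$) and extracting the $O(1/n)$ correction via the product formula and Stirling should pin down the constant.

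The main obstacle I anticipate is the upper bound's constant-tracking: obtaining convergence of order $1/n$ is the conceptually clear part, but squeezing out the explicit constant $59$ requires careful, uniform control of the Stein solution's derivatives together with sharp estimates on the remainder when the exact ratio $p(k+1)/p(k)$ is expanded, including honest bounds near the endpoints $k \approx 0$ and $k \approx n$ where the semicircle density vanishes and the diffusion is degenerate. Handling these boundary regions, where $1-w^2 \to 0$ and the naive derivative bounds blow up, is where I expect the technical work to concentrate.
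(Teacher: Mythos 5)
Your overall architecture---build a discrete operator from the ratio $p(k+1)/p(k)$ and compare it with a continuous Stein operator for the limit---is the same ``comparison of generators'' strategy the paper uses, but your proposal leaves the load-bearing steps unproved, and the hardest one you explicitly defer. For the upper bound, the entire difficulty is the one you name in your final paragraph: uniform bounds on the Stein solution for a diffusion that degenerates at the endpoints. The paper never confronts this head on; it moves the problem to the Beta world, using $2Z-1=_d S$ for $Z\sim{\mathcal B}(3/2,3/2)$ together with the scaling property \eqref{W.scales}, precisely so that it can quote the ready-made bounds \eqref{bounds.on.f} ($\|f\|\le \frac{2}{3}\|h'\|$ and $\|f'\|\le 8\|h'\|$) and the term-by-term estimates (3.6)--(3.9) from \cite{GR}. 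On the discrete side, rather than a generic birth--death balance, the paper's Proposition \ref{c.at.a.is.zero} multiplies through by $c(k)=k(2(n-k)-1)$, which clears the denominator of $\psi(k)=\Delta p(k)/p(k)$ and produces the exact polynomial identity \eqref{rsn.identity}; since that identity has bounded, nonsingular coefficients, the boundary region you worry about never causes trouble, and since the comparison is made with the first-order (density form) Stein equation, only $\|f\|$, $\|f'\|$ and $\|h'\|$ are ever needed. Your plan instead uses the second-order generator plus Taylor expansion, hence needs $\|f''\|$ and $\|f'''\|$---exactly the quantities that are problematic near $\pm 1$ and for which you cite no bound. Without these ingredients, ``tracking the constants carefully yields the explicit $59/n$'' is an assertion, not an argument; the number $59$ is an artifact of the particular \cite{GR} bounds. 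There is also an algebra slip that would propagate: from \eqref{fi}, $p(k+1)/p(k)=\frac{2k+1}{2k}\cdot\frac{2(n-k)-2}{2(n-k)-1}=\frac{(2k+1)(n-k-1)}{k(2(n-k)-1)}$, so your denominator factor $2k+2$ (equivalently $k+1$) should be $2k$ (equivalently $k$).

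For the lower bound, your first candidate $h(x)=x$ detects nothing: $p(k)=p(n-k)$, so $EX=n/2$ exactly (this is the paper's \eqref{EW}), hence $E[2X/n-1]=0=ES$ and the first moments agree. The second-moment comparison is the right idea---it is what the paper does---but your proposed route, Stirling applied to the product formula, yields only an asymptotic statement $EW^2=\frac14-c/n+o(1/n)$, which does not prove a bound asserted for every $n$; converting it into a uniform explicit inequality would be laborious. The paper avoids asymptotics entirely: it plugs the linear function $f(x)=x$ into the same exact identity \eqref{rsn.identity} used for the upper bound and reads off, in the $W=X/n$ normalization, $E[\frac12 W^2]=\frac{5}{32}-\frac{2+n}{32n^2}$, versus $E[\frac12 Z^2]=\frac{5}{32}$ for the Beta limit, after which the bound follows immediately from \eqref{w.as.sup} with the $1$-Lipschitz test function $w^2/2$. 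Your lattice-spacing observation (a sawtooth test function gives an order-$1/n$ discrepancy) is a legitimate alternative route in principle, but it too is left unexecuted. In short: the strategy is right in spirit, but the proposal lacks the specific devices---the reduction to ${\mathcal B}(3/2,3/2)$, the multiplier $c(k)$, the imported \cite{GR} bounds, and the exact moment identity---that turn the plan into a proof.
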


The reader may wonder why there is a need for Theorem \ref{mainthm}, given that there is an explicit formula \eqref{fi}. The answer is that explicit formulas are not always so informative; just as it is interesting to quantify the error in the normal approximation to the (explicit) binomial distribution, it is interesting to quantify the error of approximation by \eqref{limit.semi}.

We mention that there is some prior work relating Stein's method to the semicircle law. Namely G\"otze and Tikhomirov \cite{GT} show that the expected spectral distribution of certain random matrices converges to the semicircle law by using Stein's method to study the Stieltjes transform of the empirical spectral distribution function of random matrices. The methods we employ are completely different; we use a relationship between the semicircle law and the Beta distribution, and the ``comparison of generators'' approach to the Beta distribution from \cite{GR}.

The work in \cite{GR} applied here uses a `discrete density' approach to Stein's method to obtain Proposition \ref{c.at.a.is.zero} below, and as noted in \cite{GR}, shares similarity to the approach taken in \cite{LS}.

\section{Main results} \label{main}
We apply the ideas in \cite{GR}, where the Beta approximation to the P\'olya urn distribution was studied. The support $\{0,\ldots,n\}$ of the examples studied in \cite{GR} corresponds nicely after scaling to the support of the limiting Beta distribution, whereas here the random variable $X$ has support $\{1,\ldots,n-1\}$. To handle the current situation, we provide the following slight generalization of one direction of Corollary 2.1 of \cite{GR} which removes a boundary condition on functions $f$ for the satisfaction of \eqref{c.identity}. In the following, for any $q \in\mathbb{R}$ let $\Delta_q f(k)=f(k+q)-f(k)$ and let $\Delta=\Delta_1$ the forward difference.

\begin{prop} \label{c.at.a.is.zero}
Let $p$ be a probability mass function with support the integer interval $I=[a,b] \cap \mathbb{Z}$ where $a \le b, a,b \in \mathbb{Z}$, and let $\psi$ be given by
\bea \label{def:psi}
\psi(k)= \frac{\Delta p(k)}{p(k)}, \quad k \in I.
\ena
Then for all functions $c: [a-1,b] \cap \mathbb{Z} \rightarrow \mathbb{R}$ satisfying $c(a-1)=0$, if $Y$ is a random variable with probability mass function $p$ then
\bea  \label{c.identity}
E[c(Y-1)\Delta f(Y-1)+[c(Y)\psi(Y)+c(Y)-c(Y-1)]f(Y)]=0
\ena
for all functions $f:[a-1,b] \rightarrow \mathbb{R}$.
\end{prop}

\begin{proof} By (11) in the proof of Proposition 2.1 of \cite{GR}, we see that
\bea \label{characterization.one}
E\left[ \Delta f(X-1) + \psi(X) f(X) +f(a-1){\bf 1}(X=a) \right] =0
\ena
for all functions $f:[a-1,b] \rightarrow \mathbb{R}$. Replacing $f(k)$ by $c(k)f(k)$ and using $c(a-1)=0$ yields the result.
\end{proof}

We next note that may write \eqref{fi} a bit more compactly and express the distribution of $X$ as
\bea \label{def.pk}
p(k) = \frac{1}{{n \choose 2}} \frac{\prod_{j=1}^{k-1}(2j+1)}{\prod_{j=1}^{k-1}2j} \frac{\prod_{j=1}^{n-k-1}(2j+1)}{\prod_{j=1}^{n-k-1}2j}, \qmq{$1 \le k \le n-1$.}
\ena

We will find it more convenient to make a linear transformation on both sides of \eqref{limit.semi} and deal instead with a limiting Beta distribution; we recall that the Beta distribution ${\mathcal B}(\alpha,\beta)$ with positive parameters $\alpha,\beta$ is supported on $(0,1)$ with density there proportional to $x^{\alpha-1} (1-x)^{\beta-1}$.

By \eqref{alter.wass} one may directly verify that the Wasserstein distance satisfies the scaling property
\bea \label{W.scales}
d_W(aU+b,aV+b)=|a|d_W(U,V) \qmq{for all real $a,b$.}
\ena
Next, it is easy to verify that
\bea \label{2Z-1}
2Z-1=_d S
\ena
when $Z \sim {\mathcal B}(3/2,3/2)$ and $S$ has the semicircle distribution, where $=_d$ denotes equality of distribution.
\ignore{
The density $f_Z(z)$ of the linear transformation of $S$ is given by
\begin{multline*}
f_Z(z)=\frac{dP(Z \le z)}{dz} = \frac{dP((S+1)/2 \le z)}{dz}
= \frac{dP(S\le 2z-1)}{dz} \\ = 2f_S(2z-1) = \frac{4}{\pi}\sqrt{1-(2z-1)^2} = \frac{8}{\pi}z^{1/2}(1-z^2)^{1/2}=\frac{z^{1/2}(1-z^2)^{1/2}}{{\cal B}(\frac{3}{2},\frac{3}{2})}.
\end{multline*}
}
From \eqref{W.scales} and \eqref{2Z-1} we see that Theorems \ref{mainthm} and \ref{thm:rsn} are equivalent.

\begin{theorem} \label{thm:rsn}
Let $W_n=X/n$ with $X$ having distribution \eqref{def.pk} and let $Z \sim {\mathcal B}(\frac{3}{2},\frac{3}{2})$. Then
\beas
\frac{1}{32n} \le d_W(W_n,Z) \le \frac{59}{2n}.
\enas
\end{theorem}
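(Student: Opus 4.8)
The plan is to prove the upper bound by the comparison-of-generators method and the lower bound by an exact variance computation, both driven by Proposition \ref{c.at.a.is.zero}. For the upper bound I would first record the Stein characterization of the limit: integrating by parts against the $\mathcal{B}(3/2,3/2)$ density, which is proportional to $w^{1/2}(1-w)^{1/2}$, shows that $Z\sim\mathcal{B}(3/2,3/2)$ iff $E[\mathcal{A}g(Z)]=0$ for the operator $\mathcal{A}g(w)=w(1-w)g'(w)+3(\tfrac12-w)g(w)$. Given a $1$-Lipschitz $h$, the Stein equation $\mathcal{A}g=h-Eh(Z)$ is a first-order linear ODE with solution $g(w)=(w(1-w))^{-3/2}\int_0^w (t(1-t))^{1/2}(h(t)-Eh(Z))\,dt$. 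Since $d_W(W_n,Z)=\sup_h|Eh(W_n)-Eh(Z)|=\sup_h|E\mathcal{A}g(W_n)|$, the task reduces to bounding $|E\mathcal{A}g(W_n)|$ uniformly over such $g$.

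To connect $\mathcal{A}$ with the law of $X$, I would apply Proposition \ref{c.at.a.is.zero} with $p$ as in \eqref{def.pk}, $a=1$, $b=n-1$. A short computation from \eqref{def.pk} gives $\psi(k)=\Delta p(k)/p(k)=(2n-4k-2)/(2k(2n-2k-1))$. The key choice is $c(k)=k(2n-2k-1)/(2n)$, which vanishes at $k=a-1=0$ as required, makes $c(k)\psi(k)$ linear, and yields the exact identity $c(k)\psi(k)+c(k)-c(k-1)=3(\tfrac12-k/n)$. Taking $f(k)=g(k/n)$ in \eqref{c.identity} and Taylor expanding $\Delta f(X-1)=\tfrac1n g'(X/n)-\tfrac{1}{2n^2}g''(\zeta)$ converts \eqref{c.identity} into $E[\mathcal{A}g(W_n)]=E[(W_n(1-W_n)-c(X-1)/n)g'(W_n)]+E[c(X-1)g''(\zeta)/(2n^2)]$. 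Here the coefficient mismatch is $W_n(1-W_n)-c(X-1)/n=(2n-3X+1)/(2n^2)$, of size $O(1/n)$, while $E[c(X-1)]=3(n-2)(n+1)/(16n)$ is of size $O(n)$, so both terms are $O(1/n)$; bounding them by $\|g'\|_\infty/n$ and (roughly) $3\|g''\|_\infty/(32n)$ gives the claimed rate.

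The main obstacle is obtaining explicit bounds on $\|g'\|_\infty$ and $\|g''\|_\infty$ for the solution $g$ above, since these are exactly what produce the constant $59/2$. The integrand carries the singular prefactor $(w(1-w))^{-3/2}$, so one must show that integration against the vanishing density cancels the blow-up at the endpoints $0$ and $1$ and then extract numerical constants. Because $\alpha=\beta=3/2>1$ these derivatives are finite, but the endpoint estimates are delicate and constitute the technical heart of the argument (compare the solution bounds in \cite{GR}).

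For the lower bound I would test against $h(x)=(x-1/2)^2$, which is $1$-Lipschitz on $[0,1]$ and extends to a globally $1$-Lipschitz function; as $W_n$ and $Z$ are supported in $[0,1]$ and both have mean $1/2$ (by the $k\leftrightarrow n-k$ symmetry of \eqref{def.pk}), this gives $d_W(W_n,Z)\ge |\mathrm{Var}(W_n)-\mathrm{Var}(Z)|$. Applying \eqref{c.identity} with the same $c$ and with $f$ linear yields the exact moment $E[X^2]=(5n^2-n-2)/16$, hence $\mathrm{Var}(W_n)=(n-2)(n+1)/(16n^2)$, while $\mathrm{Var}(Z)=1/16$. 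The difference equals $(n+2)/(16n^2)$, which exceeds $1/(16n)$ and therefore comfortably beats the required $1/(32n)$, completing the proof.
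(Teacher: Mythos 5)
Your lower bound argument is correct and is essentially the paper's own: both extract $EX$ and $EX^2$ exactly from the discrete Stein identity (the paper tests \eqref{rsn.identity} with an indicator and then with $f(x)=x$; you use the $k\leftrightarrow n-k$ symmetry and a linear $f$), and both then test the Wasserstein supremum against a quadratic function that is $1$-Lipschitz on $[0,1]$. Your choice $h(x)=(x-\tfrac12)^2$ in fact gives the slightly stronger bound $(n+2)/(16n^2)\ge 1/(16n)$, versus the paper's $(n+2)/(32n^2)$ from $h(w)=w^2/2$. Your setup for the upper bound (the choice $c(k)=k(2n-2k-1)/(2n)$, the resulting linear drift $3(\tfrac12-k/n)$, and the coefficient mismatch $(2n-3X+1)/(2n^2)$) also agrees, after rescaling by $2n$, with the paper's computation.

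The upper bound, however, has a genuine gap, and it sits exactly where you flag ``the main obstacle.'' Your second-order Taylor expansion $\Delta f(X-1)=\tfrac1n g'(W_n)-\tfrac{1}{2n^2}g''(\zeta)$ requires a uniform bound $\|g''\|\le C\|h'\|$ for the Stein solution over all $1$-Lipschitz $h$, and no such bound exists. Differentiating the Stein equation gives
\beas
w(1-w)g''(w)=h'(w)+3g(w)-\left(\tfrac52-5w\right)g'(w),
\enas
and since $g$ and $g'$ are continuous, at any interior point where $h'$ jumps the quantity $w(1-w)g''(w)$ must jump by the same amount. Taking $h(t)=|t-\epsilon|$, which is $1$-Lipschitz, forces $\sup_w|g''(w)|\ge 1/(\epsilon(1-\epsilon))$, which blows up as the kink $\epsilon$ approaches the boundary of the support. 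So your assertion that ``because $\alpha=\beta=3/2>1$ these derivatives are finite'' cannot be made uniform over the test class, and no constant (let alone $59/2$) can be produced along this route as stated. This is precisely why the paper never Taylor-expands to second order: it compares $W(1-W)f'(W)$ with $nW(1-W)\Delta_{1/n}f(W-1/n)$ via the inequalities (3.6)--(3.9) of \cite{GR}, which control this mismatch using only $\|f\|\le\tfrac23\|h'\|$, $\|f'\|\le 8\|h'\|$ and $\|h'\|$ itself --- in effect using the Stein equation to trade differences of $f'$ for differences of $h$ and $f$, so that no second derivative ever appears. Your argument could plausibly be repaired with the \emph{weighted} bound $\sup_w w(1-w)|g''(w)|\le \|h'\|+3\|g\|+\tfrac52\|g'\|$ (immediate from the display above), since your coefficient $c(X-1)/(2n^2)$ is comparable to $\zeta(1-\zeta)/(2n)$ and so vanishes at the endpoints at the same rate as the weight; but one would still have to handle the points where $g''$ fails to exist (integral-form remainder or smoothing) and re-derive explicit constants, so as written the key analytic lemma is missing and its unweighted form is false.
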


\begin{proof} We apply Proposition \ref{c.at.a.is.zero} to the distribution of $X$, supported on $[1,n-1] \cap \mathbb{Z}$. To calculate $\psi$ in \eqref{def:psi}, by \eqref{def.pk}, for $1 \le k \le n-1$ we have
\begin{multline*}
p(k+1)
= \left( \frac{2k+1}{2k} \right) \left( \frac{2(n-k)-2}{2(n-k)-1} \right) p(k) = \frac{(2k+1)(n-k-1)}{k(2(n-k)-1)} p(k),
\end{multline*}
so that
\begin{eqnarray*}
\psi(k)  = \frac{(2k+1)(n-k-1)}{k(2(n-k)-1)} - 1 = \frac{n-2k-1}{k(2(n-k)-1)}, \qm{$1 \le k \le n-1$.}
\end{eqnarray*}
Let
\beas
c(k)=k(2(n-k)-1).
\enas
Then
\beas
c(k-1) = (k-1)(2(n-k+1)-1) = (k-1)(2(n-k)+1)
\enas
and
\beas
c(k)\psi(k)+c(k)-c(k-1)
=-6k+3n=3(n-k)-3k.
\enas
Hence, noting $c(0)=0$, Proposition \ref{c.at.a.is.zero} yields
\beas
E\left[ (X-1)(2(n-X)+1) \Delta f(X-1) +[3(n-X)-3X] f(X) \right]=0
\enas
for all $f:[0,n-1] \cap \mathbb{Z} \rightarrow \mathbb{R}$. Replacing $f(k)$ by $f(k/n)$, dividing by $2n$ and now writing $W=X/n$ we obtain
\begin{multline}
E \left[ \left(nW-1\right) \left(1-W+\frac{1}{2n} \right) \Delta_{1/n} f(W - \frac{1}{n}) + \left(\frac{3}{2}(1-W)-\frac{3}{2}W\right) f(W) \right]\\
=0 \label{rsn.identity}
\end{multline}
for all $f: \{k/n: 0 \le k \le n-1, k \in \mathbb{Z}\} \rightarrow \mathbb{R}$. Applying \eqref{rsn.identity} by letting $f(x)$ be the indicator of the set $\{k/n: k \ge 1\}$ which contains the support of $W$, noting that $\Delta_{1/n}f(w-1/n)={\bf 1}(w=1/n)$ we obtain
\bea \label{EW}
E \left[ \frac{3}{2}(1-W)-\frac{3}{2}W  \right] =0 \qmq{and thus} EW=1/2.
\ena

Given an absolutely continuous test function $h$ on $[0,1]$, letting ${\mathcal B}h$ be the ${\mathcal B}(\frac{3}{2},\frac{3}{2})$ expectation of $h$, Lemma 3.1 of \cite{GR} shows that there exists a unique bounded solution $f$ on $[0,1]$ that solves the Beta Stein equation,
\beas
 w(1-w)f'(w)+(\alpha(1-w)-\beta w)f(w)=h(w)-{\mathcal B}h.
\enas
By Lemmas 3.2 and 3.4 in \cite{GR}, now letting $f$ take the value $0$ outside the unit interval,
$f$ obeys the bounds
\bea \label{bounds.on.f}
||f|| \le \frac{2}{3}||h'|| \qmq{and} ||f'|| \le 8||h'||,
\ena
where $||\cdot||$ denotes the supremum norm.

Using that $f$ is the solution to the Stein equation for $h$ for the first equality, and identity \eqref{rsn.identity} for the second, we have
\begin{multline}
Eh(W)- {\mathcal B} h = E\left[ W(1-W)f'(W)+\left(\frac{3}{2}(1-W)-\frac{3}{2} W\right)f(W) \right] \\
= E\left[ W(1-W)f'(W)-\left(nW-1\right) \left((1-W)+\frac{1}{2n} \right) \Delta_{1/n} f(W-1/n) \right]\\
= E\left[ W(1-W)f'(W)- nW (1-W) \Delta_{1/n} f(W-1/n)\right]  +R_1 \label{R1}
\end{multline}
where
\beas
R_1 = E\left[ \left( 1 + \frac{1}{2n} - \frac{3}{2}W \right)\Delta_{1/n}f(W-1/n)\right] .
\enas
As the mean value theorem yields $|\Delta_q f(x)| \le q||f'||$ for all $q>0$ and $x \in \mathbb{R}$,  we obtain
\bea \label{R1.bound}
R_1 \le
\frac{1}{n}\left(1 + \frac{1}{2n} + \frac{3}{4} \right) ||f'|| \le \frac{9}{4n}||f'||,
\ena
where we have applied \eqref{EW}.

Now returning to the first term in \eqref{R1}, collecting terms (3.6), (3.7), (3.8) and (3.9) from the proof of Theorem 1.1 in \cite{GR} specialized to $\alpha=\beta=3/2$ and $m=1$, we have
\begin{multline}
\Bvert E\left[ W(1-W)f'(W)-nW(1-W)\Delta_{1/n}f(W-1/n) \right] \Bvert \\
\le
\frac{1}{2n}||f'||+\frac{1}{2n}||h'||+\frac{3}{4n}||f'||+\frac{3}{2n}||f||. \label{int.bound}
\end{multline}

Combining bounds \eqref{R1.bound} and \eqref{int.bound} and then applying \eqref{bounds.on.f} we obtain
\beas
\vert Eh(W)-{\mathcal B}h \vert \le \frac{1}{n}\left( \frac{14}{4}||f'||+ \frac{1}{2}||h'||+\frac{3}{2} ||f||\right) \le \frac{59}{2n}||h'||.
\enas
Taking supremum over functions $h$ satisfying $||h'|| \le 1$ yields the upper bound in Theorem \ref{thm:rsn}.

For the lower bound, applying \eqref{rsn.identity}
for $f(x)=x$ one has $\Delta_{1/n}f(x)=1/n$, and
\beas
E \left[ \left(W-1/n\right) \left(1-W+\frac{1}{2n} \right) + \left(\frac{3}{2}(1-W)-\frac{3}{2}W\right) W \right] =0.
\enas
Solving for $EW^2$ and using $EW=1/2$ from \eqref{EW}, we obtain
\beas
E\left( \frac{1}{2}W^2 \right) =\frac{5}{32}-\frac{2+n}{32n^2}.
\enas
It is easy to verify that
\beas
E\left( \frac{1}{2}Z^2 \right) = \frac{5}{32}.
\enas
As the function $h(w)=w^2/2$ is an element of ${\mathcal L}_1$ on $[0,1]$, we obtain the claimed lower bound from \eqref{w.as.sup}, as
\beas
d_W(W,Z) \ge \Bvert E\left( \frac{1}{2}W^2 \right) - E\left( \frac{1}{2}Z^2 \right) \Bvert = \frac{2+n}{32n^2} \ge \frac{1}{32 n}.
\enas
\end{proof}

\end{document}